\documentclass[11pt]{amsart}

\usepackage{amsmath,amssymb,amsthm,mathtools}
\usepackage{enumitem}
\usepackage{graphicx}
\usepackage{hyperref}
\hypersetup{hidelinks}

\numberwithin{equation}{section}
\newtheorem{theorem}{Theorem}[section]
\newtheorem{proposition}[theorem]{Proposition}
\newtheorem{lemma}[theorem]{Lemma}
\newtheorem{corollary}[theorem]{Corollary}

\theoremstyle{definition}
\newtheorem{definition}[theorem]{Definition}
\newtheorem{example}[theorem]{Example}

\theoremstyle{remark}
\newtheorem{remark}[theorem]{Remark}

\DeclareMathOperator{\Mag}{Mag}
\newcommand{\R}{\mathbb{R}}
\newcommand{\one}{\mathbf{1}}
\newcommand{\cA}{\mathcal{A}}
\newcommand{\cB}{\mathcal{B}}
\newcommand{\Zmat}{Z}

\title[Small-scale magnitude below one]{Small-scale magnitude below one for cyclic two-chunk finite metric spaces}

\author{Tsubasa Kamiyama}
\date{}
\subjclass[2020]{51F99, 05C12, 15A18}
\keywords{finite metric spaces, magnitude, distance matrices, small-scale limit, one-point property}

\begin{document}

\begin{abstract}
Motivated by the small-scale viewpoint of Roff and Yoshinaga, we study finite metric spaces whose scaled copies collapse to a single point while their magnitude remembers how the collapse takes place.  The limit metric space is geometrically indistinguishable from a point, but the magnitude function can detect differences in the path of collapse.  We introduce a four-parameter family of cyclic two-chunk finite metric spaces, compute their magnitude explicitly, and use the formula to construct balanced examples whose small-scale magnitude is less than one.  In particular, we exhibit a twelve-point finite metric space satisfying
\(
\lim_{t\to 0+}\Mag(tX)=44/59<1.
\)
The guiding question and the terminology around the one-point property come from Leinster's magnitude of finite metric spaces and from Roff--Yoshinaga's work on small-scale limits.
\end{abstract}

\maketitle

\section{Introduction}

Magnitude was introduced by Leinster as a size invariant of enriched
categories and, in particular, of finite metric spaces \cite{Leinster2013}.
For a finite metric space \(X\), its scaled copy \(tX\) is obtained by
multiplying all distances by \(t>0\).  As \(t\to 0^+\), the spaces \(tX\)
collapse to a one-point metric space.  The one-point property asks whether
this geometric collapse is reflected by magnitude, namely whether
\begin{equation}\label{eq:one-point-property}
\lim_{t\to 0^+}\Mag(tX)=1.
\end{equation}

The small-scale behaviour of magnitude was recently studied systematically
by Roff and Yoshinaga \cite{RoffYoshinaga2025}.  Their work gives a sharp
picture of the one-point property for finite metric spaces.  They proved
that, for each fixed cardinality, the space of finite metric spaces contains
a dense open subset on which the one-point property holds.  In this sense,
the one-point property is generic.  They also showed that this generic
behaviour is best possible: every metric space with at most four points has
the one-point property, while the property already fails for a certain
five-point metric space.  In their example, the space is obtained from a
two-point space of distance \(4/3\) and a three-point equilateral space of
distance \(2\), and its small-scale magnitude is \(7/6\).

An important insight of Roff and Yoshinaga's work is that failure of the
one-point property is not merely a technical pathology.  Although such
failure is non-generic, it can be quantitatively large: the small-scale
limit of magnitude can be made to take arbitrary prescribed real values
greater than one.  Thus their results reveal a striking contrast.  Magnitude
is generically stable under collapse to a point, but, in exceptional cases,
the small-scale limit can retain delicate information about the manner in
which the original finite metric space collapses.

The present note continues this line of investigation by showing that the
exceptional behaviour is not confined to the side above one.  We construct
explicit finite metric spaces whose small-scale magnitude is strictly less
than one.  More precisely, we introduce a four-parameter family of cyclic
two-chunk finite metric spaces, compute their magnitude explicitly, and
exhibit a twelve-point example satisfying
\[
\lim_{t\to 0^+}\Mag(tX)=\frac{44}{59}<1.
\]
Consequently, a finite metric space can collapse to a point while its
magnitude converges to a value below the magnitude of a point.

Before giving the definitions, we display the motivating example.  Let \(X\) be the twelve-point finite metric space whose distance matrix is
\begingroup
\setlength{\arraycolsep}{4pt}
\renewcommand{\arraystretch}{1.18}
\begin{equation}\label{eq:intro-matrix}
\resizebox{\textwidth}{!}{$\displaystyle D=
\begin{pmatrix}
0&\frac{7}{3}&\frac{7}{3}&\frac{7}{3}&\frac{7}{3}&\frac{7}{3}&3&3&\frac{7}{6}&\frac{7}{6}&\frac{7}{6}&\frac{7}{6}\\
\frac{7}{3}&0&\frac{7}{3}&\frac{7}{3}&\frac{7}{3}&\frac{7}{3}&\frac{7}{6}&3&3&\frac{7}{6}&\frac{7}{6}&\frac{7}{6}\\
\frac{7}{3}&\frac{7}{3}&0&\frac{7}{3}&\frac{7}{3}&\frac{7}{3}&\frac{7}{6}&\frac{7}{6}&3&3&\frac{7}{6}&\frac{7}{6}\\
\frac{7}{3}&\frac{7}{3}&\frac{7}{3}&0&\frac{7}{3}&\frac{7}{3}&\frac{7}{6}&\frac{7}{6}&\frac{7}{6}&3&3&\frac{7}{6}\\
\frac{7}{3}&\frac{7}{3}&\frac{7}{3}&\frac{7}{3}&0&\frac{7}{3}&\frac{7}{6}&\frac{7}{6}&\frac{7}{6}&\frac{7}{6}&3&3\\
\frac{7}{3}&\frac{7}{3}&\frac{7}{3}&\frac{7}{3}&\frac{7}{3}&0&3&\frac{7}{6}&\frac{7}{6}&\frac{7}{6}&\frac{7}{6}&3\\
3&\frac{7}{6}&\frac{7}{6}&\frac{7}{6}&\frac{7}{6}&3&0&\frac{29}{15}&\frac{29}{15}&\frac{29}{15}&\frac{29}{15}&\frac{29}{15}\\
3&3&\frac{7}{6}&\frac{7}{6}&\frac{7}{6}&\frac{7}{6}&\frac{29}{15}&0&\frac{29}{15}&\frac{29}{15}&\frac{29}{15}&\frac{29}{15}\\
\frac{7}{6}&3&3&\frac{7}{6}&\frac{7}{6}&\frac{7}{6}&\frac{29}{15}&\frac{29}{15}&0&\frac{29}{15}&\frac{29}{15}&\frac{29}{15}\\
\frac{7}{6}&\frac{7}{6}&3&3&\frac{7}{6}&\frac{7}{6}&\frac{29}{15}&\frac{29}{15}&\frac{29}{15}&0&\frac{29}{15}&\frac{29}{15}\\
\frac{7}{6}&\frac{7}{6}&\frac{7}{6}&3&3&\frac{7}{6}&\frac{29}{15}&\frac{29}{15}&\frac{29}{15}&\frac{29}{15}&0&\frac{29}{15}\\
\frac{7}{6}&\frac{7}{6}&\frac{7}{6}&\frac{7}{6}&3&3&\frac{29}{15}&\frac{29}{15}&\frac{29}{15}&\frac{29}{15}&\frac{29}{15}&0
\end{pmatrix}$}
\end{equation}
\endgroup
The terminology used to describe this matrix will be introduced below.  It is a
\((6,2;7/3,29/15,3,7/6)\)-two-chunk cyclic metric space; equivalently, its
parameters are
\begin{equation}\label{eq:intro-parameters}
\alpha=\frac{7}{3},\qquad
\beta=\frac{29}{15},\qquad
\gamma=3,
\qquad
\delta=\frac{7}{6}.
\end{equation}
Using the closed formula proved in Proposition \ref{prop:magnitude-formula} and applying l'Hopital's rule twice as in Proposition \ref{prop:lhopital}, one obtains
\begin{equation}\label{eq:intro-limit}
\lim_{t\to 0+}\Mag(tX)=\frac{44}{59}<1.
\end{equation}
Thus this example collapses to a one-point space, but its small-scale magnitude does not converge to the magnitude of a point.

\section{Preliminaries}

\begin{definition}[Finite metric space]\label{def:finite-metric-space}
A \emph{finite metric space} is a pair \((X,d)\), where \(X\) is a finite set and
\begin{equation}\label{eq:metric-axioms}
 d\colon X\times X\longrightarrow \R_{\geq 0}
\end{equation}
 satisfies, for all \(x,y,z\in X\),
\begin{equation}\label{eq:metric-axioms-expanded}
 d(x,y)=0\Longleftrightarrow x=y,\qquad
 d(x,y)=d(y,x),\qquad
 d(x,z)\leq d(x,y)+d(y,z).
\end{equation}
\end{definition}

\begin{definition}[Magnitude of a finite metric space]\label{def:magnitude}
Let \((X,d)\) be a finite metric space with
\begin{equation}\label{eq:ordered-space}
X=\{x_1,\ldots,x_m\}.
\end{equation}
Its \emph{zeta matrix} is
\begin{equation}\label{eq:zeta-matrix}
\Zmat_X=\bigl(e^{-d(x_i,x_j)}\bigr)_{1\leq i,j\leq m}.
\end{equation}
A \emph{weighting} on \(X\) is a vector \(w=(w_1,
\ldots,w_m)^T\in \R^m\) satisfying
\begin{equation}\label{eq:weighting-equation}
\Zmat_X w=\one,
\end{equation}
where
\begin{equation}\label{eq:one-vector}
\one=(1,1,\ldots,1)^T\in \R^m.
\end{equation}
If a weighting exists, the \emph{magnitude} of \(X\) is
\begin{equation}\label{eq:magnitude-definition}
\Mag(X)=\sum_{i=1}^m w_i.
\end{equation}
When \(\Zmat_X\) is invertible, the weighting is unique and
\begin{equation}\label{eq:magnitude-inverse}
\Mag(X)=\one^T\Zmat_X^{-1}\one.
\end{equation}
\end{definition}

\begin{definition}[Scaled metric space]\label{def:scaled-metric-space}
Let \((X,d)\) be a finite metric space and let \(t>0\).  The \emph{\(t\)-scaled metric space} is
\begin{equation}\label{eq:t-scaled-space}
 tX=(X,td),
\end{equation}
where
\begin{equation}\label{eq:t-scaled-distance}
(td)(x,y)=t\,d(x,y)
\end{equation}
for all \(x,y\in X\).
\end{definition}

\begin{definition}[Distance matrix]\label{def:distance-matrix}
A real \(m\times m\) matrix \(D=(D_{ij})\) is called a \emph{distance matrix} if
\begin{equation}\label{eq:distance-matrix-conditions}
D_{ii}=0,\qquad
D_{ij}=D_{ji}>0\quad (i\neq j),
\end{equation}
and if the triangle inequalities
\begin{equation}\label{eq:matrix-triangle-inequality}
D_{ij}\leq D_{i\ell}+D_{\ell j}
\end{equation}
hold for all \(1\leq i,j,\ell\leq m\).
\end{definition}

\begin{definition}[Metric space generated by a distance matrix]\label{def:generated-space}
Let \(D=(D_{ij})\) be an \(m\times m\) distance matrix.  The finite metric space generated by \(D\) is
\begin{equation}\label{eq:generated-space}
X_D=\{x_1,\ldots,x_m\},
\end{equation}
with metric \(d_D\) defined by
\begin{equation}\label{eq:generated-distance}
d_D(x_i,x_j)=D_{ij}.
\end{equation}
\end{definition}

Before introducing the cyclic family, we fix the following notation: for
\(s\in\R\), \(\lfloor s\rfloor\) denotes the greatest integer not exceeding
\(s\).

\begin{definition}[\((n,k;\alpha,\beta,\gamma,\delta)\)-two-chunk cyclic distance matrix]\label{def:two-chunk-matrix}
Let \(n\geq 2\), let \(1\leq k\leq \lfloor n/2\rfloor\), and let
\begin{equation}\label{eq:parameters-positive}
\alpha,\beta,\gamma,\delta>0.
\end{equation}
Put
\begin{equation}\label{eq:two-block-set}
\cA=\{a_0,\ldots,a_{n-1}\},\qquad
\cB=\{b_0,\ldots,b_{n-1}\},
\end{equation}
where \(a_{i+n}=a_i\) and \(b_{i+n}=b_i\) for every integer \(i\).  The \emph{\((n,k;\alpha,\beta,\gamma,\delta)\)-two-chunk cyclic matrix} is the symmetric \(2n\times 2n\) matrix \(D_{n,k}(\alpha,\beta,\gamma,\delta)\) whose entries are
\begin{align}
D(a_i,a_j)&=
\begin{cases}
0,& i=j,\\
\alpha,& i\neq j,
\end{cases}
\label{eq:A-block-distance}\\
D(b_i,b_j)&=
\begin{cases}
0,& i=j,\\
\beta,& i\neq j,
\end{cases}
\label{eq:B-block-distance}\\
D(a_i,b_j)&=D(b_j,a_i)=
\begin{cases}
\gamma,&
\begin{aligned}[t]
&\text{if } j\equiv i+s \pmod{n}\\
&\text{for some } s\in\{0,1,\ldots,k-1\},
\end{aligned}\\
\delta,& \text{otherwise}.
\end{cases}
\label{eq:cross-block-distance}
\end{align}
If this matrix is a distance matrix in the sense of Definition \ref{def:distance-matrix}, then it is called an \emph{\((n,k;\alpha,\beta,\gamma,\delta)\)-two-chunk cyclic distance matrix}.
\end{definition}

\begin{definition}[\((n,k;\alpha,\beta,\gamma,\delta)\)-two-chunk cyclic metric space]\label{def:two-chunk-space}
The metric space generated by an \((n,k;\alpha,\beta,\gamma,\delta)\)-two-chunk cyclic distance matrix is called an \emph{\((n,k;\alpha,\beta,\gamma,\delta)\)-two-chunk cyclic metric space}.  We denote it by
\begin{equation}\label{eq:two-chunk-space-notation}
X_{n,k}(\alpha,\beta,\gamma,\delta).
\end{equation}
\end{definition}

\begin{lemma}[Triangle inequalities for the two-chunk matrix]\label{lem:metric-conditions}
For \(n\geq 3\) and \(1\leq k\leq \lfloor n/2\rfloor\), the matrix \(D_{n,k}(\alpha,\beta,\gamma,\delta)\) is a distance matrix if and only if the following inequalities hold:
\begin{align}
\max\{\alpha,\beta\}&\leq 2\gamma &&\text{if } k\geq 2,
\label{eq:metric-condition-gamma-gamma}\\
\max\{\alpha,\beta\}&\leq 2\delta,
\label{eq:metric-condition-delta-delta}\\
\max\{\alpha,\beta\}&\leq \gamma+\delta,
\label{eq:metric-condition-mixed-sum}\\
|\gamma-\delta|&\leq \min\{\alpha,\beta\}.
\label{eq:metric-condition-mixed-difference}
\end{align}
\end{lemma}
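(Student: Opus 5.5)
Since positivity, symmetry and zero diagonal are automatic from Definition \ref{def:two-chunk-matrix} and \eqref{eq:parameters-positive}, the whole statement reduces to the triangle inequality \eqref{eq:matrix-triangle-inequality}. I will classify all triples \((x,y,z)\) of distinct points (triples with a repetition give trivial inequalities) by which chunks they lie in, and for each type record which triangle inequality \(d(x,z)\le d(x,y)+d(y,z)\) arises. This gives two directions: every inequality produced is implied by \eqref{eq:metric-condition-gamma-gamma}--\eqref{eq:metric-condition-mixed-difference}, and each of \eqref{eq:metric-condition-gamma-gamma}--\eqref{eq:metric-condition-mixed-difference} is actually realised by some triple.

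The triple types and the inequalities they produce are as follows.
\begin{enumerate}[label=(\roman*)]
\item Three points in \(\cA\) or three in \(\cB\): equilateral, so \(\alpha\le2\alpha\) and \(\beta\le 2\beta\) hold automatically.
\item Two points \(a_i,a_j\) of \(\cA\) with a middle point \(b_\ell\in\cB\): this gives \(\alpha\le d(a_i,b_\ell)+d(b_\ell,a_j)\), whose right side is one of \(2\gamma,\ \gamma+\delta,\ 2\delta\).
\item The same configuration with the \(\cA\)-point as middle vertex: this gives \(d(a_i,b_\ell)\le \alpha + d(a_j,b_\ell)\), that is, \(\gamma\le\alpha+\delta\) or \(\delta\le \alpha+\gamma\) (or trivial ones).
\item The configurations symmetric to (ii) and (iii) with \(\cA\) and \(\cB\) exchanged: these give the same inequalities with \(\beta\) in place of \(\alpha\).
\end{enumerate}
For sufficiency, observe that the worst case in (ii) and (iv) is bounded by \(\max\{\alpha,\beta\}\le\min\{2\gamma,\gamma+\delta,2\delta\}\). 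If \(k=1\), a pattern with both cross distances equal to \(\gamma\) never occurs, which is exactly why \eqref{eq:metric-condition-gamma-gamma} is only needed for \(k\ge2\). The inequalities from (iii) and (iv) are \eqref{eq:metric-condition-mixed-difference}. So sufficiency is a direct check.

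For necessity, which I expect to be the main obstacle, I must exhibit, for the given \(n\ge3\) and \(1\le k\le\lfloor n/2\rfloor\), index choices realising each pattern. Writing \(S_i=\{i,\dots,i+k-1\}\bmod n\) for the \(\gamma\)-neighbours of \(a_i\) in \(\cB\), I need the following.
\begin{itemize}
\item \textbf{Pattern \(2\gamma\).} This needs \(i\ne j\) and \(\ell\in S_i\cap S_j\). For \(k\ge2\), take \(j=i+1\) and \(\ell=i+1\).
\item \textbf{Pattern \(2\delta\).} This needs \(\ell\notin S_i\cup S_j\), which is possible since \(|S_i\cup S_j|\le 2k\). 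If \(2k<n\), a free index exists. If \(2k=n\), take \(j=i+1\), so that the union has size \(k+1<n\) because \(n\ge3\) forces \(k\ge1\) and \(k+1\le n-1\) when \(n\ge 3\).
\item \textbf{Pattern \(\gamma+\delta\).} This needs \(\ell\in S_i\setminus S_j\). Take \(j=i+1\) and \(\ell=i\).
\item \textbf{Pattern for \eqref{eq:metric-condition-mixed-difference}.} This needs the same \(\ell\) with one \(\gamma\) and one \(\delta\), so the same choice works.
\end{itemize}
By the cyclic symmetry \(a_i\mapsto b_{-i}\) style relabelling, or directly, the analogous sets for a fixed \(b_\ell\) give the \(\beta\)-versions. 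Here I note that \(b_\ell\) is at distance \(\gamma\) from \(a_{\ell-s}\), \(s<k\), so the same counting applies.

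The only delicate points are the boundary cases \(2k=n\) and \(k=1\), and I will double-check the \(2\delta\) case carefully there (e.g. \(n=3\), \(k=1\): \(S_0\cup S_1=\{0,1\}\), so \(\ell=2\) works). Combining sufficiency and necessity gives the equivalence.
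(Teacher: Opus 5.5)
Your proposal is correct and follows essentially the same route as the paper. Both arguments classify non-degenerate triangles by how many vertices lie in each chunk, read off the side-length patterns $(s,\gamma,\gamma)$, $(s,\delta,\delta)$, $(s,\gamma,\delta)$ for $s\in\{\alpha,\beta\}$, and check that each pattern occurs exactly under the stated hypotheses on $k$. The only difference is the realisability step: the paper fixes the vertex in the other chunk and counts its $k$ $\gamma$-neighbours and $n-k\geq 2$ $\delta$-neighbours, which avoids your explicit index choices and the case split at $2k=n$. Your justification there should simply read $k\leq\lfloor n/2\rfloor\leq n-2$ for $n\geq 3$.
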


\begin{proof}
It is enough to check non-degenerate triangles.  Triangles contained entirely
in \(\cA\) have all side lengths equal to \(\alpha\), and triangles contained
entirely in \(\cB\) have all side lengths equal to \(\beta\).  Hence those
triangles automatically satisfy the triangle inequality.

Consider next a triangle with two vertices in \(\cA\) and one vertex in
\(\cB\), say \(a_i,a_{i'}\in\cA\) with \(i\neq i'\), and \(b_j\in\cB\).  One
side has length \(\alpha\).  For a vertex \(a_\ell\in\cA\), the cross-block
side \(d(a_\ell,b_j)\) has length \(\gamma\) exactly when
\begin{equation}\label{eq:gamma-congruence-A}
j\equiv \ell+s \pmod{n}
\quad\text{for some }s\in\{0,1,\ldots,k-1\},
\end{equation}
and otherwise it has length \(\delta\).  Equivalently, for fixed \(j\), the
\(\gamma\)-neighbours of \(b_j\) inside \(\cA\) are precisely
\begin{equation}\label{eq:gamma-neighbours-A}
a_j,a_{j-1},\ldots,a_{j-k+1},
\end{equation}
where subscripts are understood modulo \(n\).  Hence exactly \(k\) vertices
of \(\cA\) have distance \(\gamma\) from \(b_j\), and the remaining
\(n-k\) vertices have distance \(\delta\) from \(b_j\).  Therefore the two
cross-block side lengths are either \((\gamma,\gamma)\), \((\delta,\delta)\),
or \((\gamma,\delta)\).  Consequently the side lengths of such a triangle are
one of
\begin{equation}\label{eq:triangle-types-A}
\alpha,\gamma,\gamma;\qquad
\alpha,\delta,\delta;\qquad
\alpha,\gamma,\delta.
\end{equation}
The type \((\alpha,\gamma,\gamma)\) occurs if and only if \(k\geq2\), because
a non-degenerate triangle must use two distinct vertices of \(\cA\).  The type
\((\alpha,\delta,\delta)\) occurs because \(n-k\geq2\), which follows from
\(k\leq\lfloor n/2\rfloor\) and \(n\geq3\).  The mixed type
\((\alpha,\gamma,\delta)\) occurs because \(k\geq1\) and \(n-k\geq1\).

The same argument for a triangle with two vertices in \(\cB\) and one vertex
in \(\cA\) gives the same three possibilities with \(\alpha\) replaced by
\(\beta\):
\begin{equation}\label{eq:triangle-types-B}
\beta,\gamma,\gamma;\qquad
\beta,\delta,\delta;\qquad
\beta,\gamma,\delta.
\end{equation}
Hence the triangle inequality is equivalent to the following conditions.  For
the triples \((s,\gamma,\gamma)\), with \(s\in\{\alpha,\beta\}\), one needs
\(s\leq2\gamma\), and this condition is relevant precisely when \(k\geq2\).
For the triples \((s,\delta,\delta)\), one needs \(s\leq2\delta\).  For the
mixed triples \((s,\gamma,\delta)\), one needs
\begin{equation}\label{eq:mixed-triangle-equivalent}
s\leq\gamma+\delta,
\qquad
|\gamma-\delta|\leq s.
\end{equation}
Combining these conditions for \(s=\alpha\) and \(s=\beta\) gives exactly
\eqref{eq:metric-condition-gamma-gamma}--\eqref{eq:metric-condition-mixed-difference}.
\end{proof}

\begin{remark}\label{rem:k-symmetry}
Interchanging \(\gamma\) and \(\delta\) replaces \(k\) by \(n-k\).  Thus the convention \(1\leq k\leq \lfloor n/2\rfloor\) removes a redundant copy of the same family.
\end{remark}

\section{Magnitude formula for two-chunk cyclic spaces}

\begin{definition}[Notation for the parameters]\label{def:alpha-beta-gamma-delta}
Let
\begin{equation}\label{eq:X-two-chunk-notation}
X=X_{n,k}(\alpha,\beta,\gamma,\delta)
\end{equation}
be an \((n,k;\alpha,\beta,\gamma,\delta)\)-two-chunk cyclic metric space.  The parameters are interpreted as follows:
\begin{align}
\alpha&=d(a_i,a_j) &&(i\neq j),
\label{eq:alpha-notation}\\
\beta&=d(b_i,b_j) &&(i\neq j),
\label{eq:beta-notation}\\
\gamma&=d(a_i,b_j) &&\bigl(j\equiv i+s \pmod{n}
\text{ for some }s\in\{0,1,\ldots,k-1\}\bigr),
\label{eq:gamma-notation}\\
\delta&=d(a_i,b_j) &&\bigl(j\not\equiv i+s \pmod{n}
\text{ for every }s\in\{0,1,\ldots,k-1\}\bigr).
\label{eq:delta-notation}
\end{align}
For the scaled space \(tX\), set
\begin{align}
A(t)&=1+(n-1)e^{-t\alpha},
\label{eq:A-t-definition}\\
B(t)&=1+(n-1)e^{-t\beta},
\label{eq:B-t-definition}\\
C(t)&=k e^{-t\gamma}+(n-k)e^{-t\delta}.
\label{eq:C-t-definition}
\end{align}
\end{definition}

\begin{proposition}[Magnitude formula]\label{prop:magnitude-formula}
Let \(X=X_{n,k}(\alpha,\beta,\gamma,\delta)\) be an \((n,k;\alpha,\beta,\gamma,\delta)\)-two-chunk cyclic metric space.  If
\begin{equation}\label{eq:nonzero-denominator-finite-t}
A(t)B(t)-C(t)^2\neq 0,
\end{equation}
then
\begin{equation}\label{eq:magnitude-formula}
\Mag(tX)=\frac{n\bigl(A(t)+B(t)-2C(t)\bigr)}{A(t)B(t)-C(t)^2}.
\end{equation}
\end{proposition}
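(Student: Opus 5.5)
The plan is to guess a weighting that is constant on each chunk and verify the weighting equation \eqref{eq:weighting-equation} directly. We never invert \(\Zmat_{tX}\) in general.

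Write \(\Zmat=\Zmat_{tX}\) in block form
\[
\Zmat=\begin{pmatrix} P & Q\\ Q^T & R\end{pmatrix},
\]
where \(P\) is indexed by \(\cA\times\cA\), \(R\) by \(\cB\times\cB\), and \(Q\) by \(\cA\times\cB\). By \eqref{eq:A-block-distance} and \eqref{eq:B-block-distance}, every row of \(P\) consists of one entry \(1\) and \(n-1\) entries \(e^{-t\alpha}\), so \(P\one=A(t)\one\). Likewise \(R\one=B(t)\one\).

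The key combinatorial input is the count already made in the proof of Lemma \ref{lem:metric-conditions}: each \(b_j\) has exactly \(k\) \(\gamma\)-neighbours \(a_j,\dots,a_{j-k+1}\) in \(\cA\). Symmetrically, each \(a_i\) has exactly \(k\) \(\gamma\)-neighbours \(b_i,\dots,b_{i+k-1}\) in \(\cB\). Hence every row sum and every column sum of \(Q\) equals \(C(t)\), that is, \(Q\one=C(t)\one\) and \(Q^T\one=C(t)\one\).

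We look for a weighting of the form \(w=(x\one,y\one)\) with scalars \(x,y\). The equation \(\Zmat w=\one\) then reduces to the \(2\times2\) linear system
\[
A(t)x+C(t)y=1,\qquad C(t)x+B(t)y=1.
\]
Its determinant is \(A(t)B(t)-C(t)^2\), which is nonzero by \eqref{eq:nonzero-denominator-finite-t}. Cramer's rule gives
\[
x=\frac{B-C}{AB-C^2},\qquad y=\frac{A-C}{AB-C^2},
\]
so \(w\) is a weighting on \(tX\).

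Summing the weights gives \(n(x+y)\), which is exactly \eqref{eq:magnitude-formula}.

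The one subtle point is well-definedness. Definition \ref{def:magnitude} defines magnitude as the sum of \emph{a} weighting. If \(\Zmat\) fails to be invertible, two different weightings could a priori have different sums, so we must show the sum does not depend on the choice. Let \(w'\) be any other weighting. Since \(\Zmat\) is symmetric and \(\Zmat w=\one\),
\[
\one^Tw'=(\Zmat w)^Tw'=w^T\Zmat w'=w^T\one.
\]
So all weightings have the same total, and \(\Mag(tX)\) is well defined without assuming invertibility.

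The main thing to get right is the column-sum claim for \(Q\), since the cyclic offset pattern makes \(Q\) non-symmetric. It holds because \(Q\) is a circulant with exactly \(k\) entries \(e^{-t\gamma}\) and \(n-k\) entries \(e^{-t\delta}\) in each row and each column. No other step presents difficulty.
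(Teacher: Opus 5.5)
Your proposal is correct and follows essentially the same route as the paper: take a vector constant on \(\cA\) and on \(\cB\), use that the relevant row and column sums of \(\Zmat_{tX}\) are \(A(t)\), \(B(t)\), \(C(t)\) to reduce \(\Zmat_{tX}w=\one\) to the \(2\times 2\) system with determinant \(A(t)B(t)-C(t)^2\), solve it, and sum the weights to get \(n\bigl(A(t)+B(t)-2C(t)\bigr)/\bigl(A(t)B(t)-C(t)^2\bigr)\). Your additional check that any two weightings have the same total, via the symmetry of \(\Zmat_{tX}\), is a point the paper leaves implicit; it is needed because the paper never shows \(\Zmat_{tX}\) is invertible, and your argument handles it correctly.
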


\begin{proof}
Consider a vector that is constant on each of the two blocks, and write it as
\begin{equation}\label{eq:block-weighting}
w(a_i)=w_{\cA}(t),\qquad
w(b_i)=w_{\cB}(t)
\end{equation}
for all \(i\).  For such a vector, every weighting equation attached to a point of \(\cA\) has left-hand side
\(
A(t)w_{\cA}(t)+C(t)w_{\cB}(t)
\),
and every weighting equation attached to a point of \(\cB\) has left-hand side
\(
C(t)w_{\cA}(t)+B(t)w_{\cB}(t)
\).
Thus this block-constant vector is a weighting exactly when it satisfies the \(2\times 2\) system
\begin{equation}\label{eq:block-weighting-system}
\begin{pmatrix}
A(t)&C(t)\\
C(t)&B(t)
\end{pmatrix}
\begin{pmatrix}
w_{\cA}(t)\\
w_{\cB}(t)
\end{pmatrix}
=
\begin{pmatrix}
1\\
1
\end{pmatrix}.
\end{equation}
Solving \eqref{eq:block-weighting-system} gives
\begin{align}
w_{\cA}(t)&=\frac{B(t)-C(t)}{A(t)B(t)-C(t)^2},
\label{eq:w-A}\\
w_{\cB}(t)&=\frac{A(t)-C(t)}{A(t)B(t)-C(t)^2}.
\label{eq:w-B}
\end{align}
Therefore
\begin{equation}\label{eq:mag-sum-weights}
\Mag(tX)=n\bigl(w_{\cA}(t)+w_{\cB}(t)\bigr)
=\frac{n\bigl(A(t)+B(t)-2C(t)\bigr)}{A(t)B(t)-C(t)^2}.
\end{equation}
\end{proof}

\begin{definition}[Numerator and denominator]\label{def:N-D}
For an \((n,k;\alpha,\beta,\gamma,\delta)\)-two-chunk cyclic metric space, define
\begin{align}
N(t)&=n\bigl(A(t)+B(t)-2C(t)\bigr),
\label{eq:N-t-definition}\\
D(t)&=A(t)B(t)-C(t)^2.
\label{eq:D-t-definition}
\end{align}
Thus, whenever \(D(t)\neq 0\),
\begin{equation}\label{eq:mag-N-D}
\Mag(tX)=\frac{N(t)}{D(t)}.
\end{equation}
\end{definition}

\begin{proposition}[Vanishing at the collapsed scale]\label{prop:N-D-vanish}
For every metric space \(X_{n,k}(\alpha,\beta,\gamma,\delta)\) in this family,
\begin{align}
\lim_{t\to 0+}N(t)&=0,
\label{eq:N-zero-limit}\\
\lim_{t\to 0+}D(t)&=0.
\label{eq:D-zero-limit}
\end{align}
\end{proposition}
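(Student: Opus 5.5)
The plan is a direct evaluation using the explicit definitions of \(A(t)\), \(B(t)\), \(C(t)\) from Definition \ref{def:alpha-beta-gamma-delta}. Each of these is a finite linear combination of exponentials \(e^{-t\lambda}\) with \(\lambda\in\{\alpha,\beta,\gamma,\delta\}\). Such functions are continuous on all of \(\R\), so \(N(t)\) and \(D(t)\) from Definition \ref{def:N-D} are also continuous in \(t\). Hence each limit as \(t\to 0^+\) equals the value of the corresponding expression at \(t=0\), and it suffices to compute \(A(0)\), \(B(0)\), \(C(0)\).

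Setting \(t=0\) makes every exponential equal to \(1\). This gives
\[
A(0)=1+(n-1)=n,\qquad B(0)=1+(n-1)=n,\qquad C(0)=k+(n-k)=n.
\]
Substituting into the definitions yields
\[
N(0)=n\bigl(n+n-2n\bigr)=0,\qquad D(0)=n\cdot n-n^2=0,
\]
which are exactly \eqref{eq:N-zero-limit} and \eqref{eq:D-zero-limit}.

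I do not expect a genuine obstacle here. The only point requiring care is to note that the formula \(\Mag(tX)=N(t)/D(t)\) is not used, so the possible vanishing of \(D(t)\) causes no issue. The statement concerns \(N\) and \(D\) themselves, which are defined as entire functions of \(t\) for all parameter values. It is also worth remarking that the metric conditions of Lemma \ref{lem:metric-conditions} play no role: the vanishing holds for any positive parameters. This reflects the fact that at \(t=0\) the zeta matrix is the all-ones matrix, so the reduced \(2\times 2\) matrix in \eqref{eq:block-weighting-system} becomes \(\begin{pmatrix} n&n\\ n&n\end{pmatrix}\), which is singular with \(\one\) in the direction annihilated by the numerator combination. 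This observation motivates the subsequent use of l'H\^opital's rule.
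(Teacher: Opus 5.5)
Your proof is correct and is the same as the paper's: compute $A(0)=B(0)=C(0)=n$ and substitute into $N$ and $D$. Your explicit remark that $N$ and $D$ are continuous (indeed entire) in $t$, so the one-sided limits equal the values at $t=0$, supplies a step the paper leaves implicit.
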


\begin{proof}
Since
\begin{equation}\label{eq:A-B-C-at-zero}
A(0)=n,
\qquad
B(0)=n,
\qquad
C(0)=n,
\end{equation}
we have
\begin{equation}\label{eq:N-D-at-zero}
N(0)=n(n+n-2n)=0,
\qquad
D(0)=n^2-n^2=0.
\end{equation}
\end{proof}

\section{Balanced condition and the small-scale formula}

\begin{definition}[Balanced condition]\label{def:balanced-condition}
An \((n,k;\alpha,\beta,\gamma,\delta)\)-two-chunk cyclic metric space \(X_{n,k}(\alpha,\beta,\gamma,\delta)\) is said to satisfy the \emph{balanced condition} if
\begin{equation}\tag{\ensuremath{\star}}\label{eq:balanced-condition}
2\bigl(k\gamma+(n-k)\delta\bigr)=(n-1)(\alpha+\beta).
\end{equation}
A scaled family \(tX\) is called a \emph{balanced \(t\)-scaled \((n,k;\alpha,\beta,\gamma,\delta)\)-two-chunk cyclic metric space} if the underlying space \(X\) satisfies \eqref{eq:balanced-condition}.
\end{definition}

\begin{proposition}[First derivatives under the balanced condition]\label{prop:first-derivatives-balanced}
Let \(tX\) be a balanced \(t\)-scaled \((n,k;\alpha,\beta,\gamma,\delta)\)-two-chunk cyclic metric space.  Then
\begin{align}
N'(0)&=0,
\label{eq:N-prime-zero}\\
D'(0)&=0.
\label{eq:D-prime-zero}
\end{align}
\end{proposition}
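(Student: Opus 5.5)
We differentiate the explicit expressions from Definition \ref{def:N-D} at \(t=0\), using only the values \(A(0)=B(0)=C(0)=n\) recorded in the proof of Proposition \ref{prop:N-D-vanish}. From \eqref{eq:A-t-definition}--\eqref{eq:C-t-definition} we obtain
\begin{equation*}
A'(0)=-(n-1)\alpha,\qquad B'(0)=-(n-1)\beta,\qquad C'(0)=-\bigl(k\gamma+(n-k)\delta\bigr).
\end{equation*}
The balanced condition \eqref{eq:balanced-condition} is exactly the statement \(2C'(0)=A'(0)+B'(0)\). We will rewrite both derivatives in terms of this single linear combination.

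For the numerator, \(N'(t)=n\bigl(A'(t)+B'(t)-2C'(t)\bigr)\). At \(t=0\) this equals
\begin{equation*}
n\bigl(-(n-1)(\alpha+\beta)+2(k\gamma+(n-k)\delta)\bigr),
\end{equation*}
which vanishes by \eqref{eq:balanced-condition}.

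For the denominator, the product rule gives \(D'(t)=A'(t)B(t)+A(t)B'(t)-2C(t)C'(t)\). Substituting \(A(0)=B(0)=C(0)=n\) yields
\begin{equation*}
D'(0)=n\bigl(A'(0)+B'(0)-2C'(0)\bigr)=N'(0),
\end{equation*}
so \(D'(0)=0\) as well.

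Nothing here is a genuine obstacle. The only point that needs care is that the derivatives are one-sided at \(t=0\), since \(tX\) is only defined for \(t>0\). However, \(A\), \(B\), \(C\) are finite combinations of exponentials and so extend to entire functions of \(t\). Thus \(N\) and \(D\) are smooth on \(\R\), and \(N'(0)\), \(D'(0)\) are simply the derivatives of these extensions. This is the interpretation needed for the subsequent l'H\^opital argument in Proposition \ref{prop:lhopital}.
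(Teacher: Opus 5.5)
Your proposal is correct and follows essentially the paper's own proof: compute \(A'(0)\), \(B'(0)\), \(C'(0)\), then observe that both \(N'(0)\) and \(D'(0)\) equal \(n\bigl(A'(0)+B'(0)-2C'(0)\bigr)\), which vanishes exactly under the balanced condition. Your added remark that \(A\), \(B\), \(C\) extend to entire functions, so the derivatives at \(t=0\) make sense, is a harmless clarification that the paper leaves implicit.
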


\begin{proof}
Put
\begin{equation}\label{eq:sigma-one}
\sigma_1=k\gamma+(n-k)\delta.
\end{equation}
Then
\begin{align}
A'(0)&=-(n-1)\alpha,
&
B'(0)&=-(n-1)\beta,
&
C'(0)&=-\sigma_1.
\label{eq:first-derivatives-A-B-C}
\end{align}
Therefore
\begin{align}
N'(0)&=n\bigl(-(n-1)(\alpha+\beta)+2\sigma_1\bigr),
\label{eq:N-prime-computation}\\
D'(0)&=n\bigl(-(n-1)(\alpha+\beta)+2\sigma_1\bigr).
\label{eq:D-prime-computation}
\end{align}
Both quantities vanish exactly under \eqref{eq:balanced-condition}.
\end{proof}

\begin{proposition}[Two applications of l'Hopital's rule]\label{prop:lhopital}
Let \(tX\) be a balanced \(t\)-scaled \((n,k;\alpha,\beta,\gamma,\delta)\)-two-chunk cyclic metric space.  If
\begin{equation}\label{eq:D-second-nonzero}
D''(0)\neq 0,
\end{equation}
then
\begin{equation}\label{eq:lhopital-formula}
\lim_{t\to 0+}\Mag(tX)=
\frac{N''(0)}{D''(0)}.
\end{equation}
\end{proposition}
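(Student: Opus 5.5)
The plan is to treat \(N\) and \(D\) as real-analytic functions of \(t\) on all of \(\R\). They are finite linear combinations of exponentials \(e^{-t c}\), so they are smooth, and in fact entire. By Proposition \ref{prop:N-D-vanish} we have \(N(0)=D(0)=0\). Under the balanced condition \eqref{eq:balanced-condition}, Proposition \ref{prop:first-derivatives-balanced} gives \(N'(0)=D'(0)=0\). Taylor's theorem with the Peano (or Lagrange) remainder at \(t=0\) then yields
\begin{equation*}
N(t)=\tfrac12 N''(0)t^2+o(t^2),\qquad D(t)=\tfrac12 D''(0)t^2+o(t^2)\qquad (t\to 0^+).
\end{equation*}

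The first step is to check that the magnitude formula actually applies near \(0\). This is the only real subtlety, since Proposition \ref{prop:magnitude-formula} needs \(D(t)\neq 0\). From the expansion above, \(D(t)/t^2\to \tfrac12 D''(0)\neq 0\) by hypothesis \eqref{eq:D-second-nonzero}. So there is \(\varepsilon>0\) with \(D(t)\neq 0\) for all \(0<t<\varepsilon\). On this interval Proposition \ref{prop:magnitude-formula} applies, and it gives \(\Mag(tX)=N(t)/D(t)\) as in \eqref{eq:mag-N-D}.

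Next, divide numerator and denominator by \(t^2\):
\begin{equation*}
\Mag(tX)=\frac{N(t)/t^2}{D(t)/t^2}\longrightarrow \frac{\tfrac12 N''(0)}{\tfrac12 D''(0)}=\frac{N''(0)}{D''(0)}.
\end{equation*}
This is \eqref{eq:lhopital-formula}.

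Alternatively, to match the title, I would apply l'H\^opital's rule twice, each time verifying its hypotheses. The first application uses the \(0/0\) form from Proposition \ref{prop:N-D-vanish}, and requires \(D'(t)\neq 0\) near \(0^+\). This holds because \(D'(t)=D''(0)t+o(t)\) and \(D''(0)\neq0\). The second application uses the \(0/0\) form from Proposition \ref{prop:first-derivatives-balanced}. The quotient \(N''(t)/D''(t)\) then converges to \(N''(0)/D''(0)\) by continuity of \(N''\) and \(D''\), again using \(D''(0)\neq 0\). The main point needing care is this non-vanishing of \(D\), \(D'\) and \(D''\) on a punctured right neighbourhood, and the hypothesis \(D''(0)\neq0\) supplies it.
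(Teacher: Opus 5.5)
Your proposal is correct and uses the same ingredients as the paper's proof. These are \(N(0)=D(0)=0\), \(N'(0)=D'(0)=0\) from the balanced condition, and \(D''(0)\neq0\), which guarantees \(D(t)\neq0\) (so that \(\Mag(tX)=N(t)/D(t)\)) and \(D'(t)\neq0\) near \(0^+\); your second route then applies l'H\^opital's rule twice exactly as the paper does, while your Taylor-expansion route (dividing by \(t^2\)) is an equivalent repackaging that is slightly cleaner, since it needs only the non-vanishing of \(D(t)\) rather than the full hypotheses of l'H\^opital's rule.
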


\begin{proof}
By Proposition \ref{prop:N-D-vanish}, both \(N(t)\) and \(D(t)\) vanish at \(t=0\).  By Proposition \ref{prop:first-derivatives-balanced}, their first derivatives also vanish at \(t=0\).  Since \(D''(0)\neq 0\), the functions \(D(t)\) and \(D'(t)\) are nonzero for all sufficiently small positive \(t\).  Since \(N(t)\) and \(D(t)\) are real analytic functions of \(t\), l'Hopital's rule applied twice gives
\begin{equation}\label{eq:lhopital-proof}
\lim_{t\to 0+}\frac{N(t)}{D(t)}
=
\lim_{t\to 0+}\frac{N'(t)}{D'(t)}
=
\lim_{t\to 0+}\frac{N''(t)}{D''(t)}
=
\frac{N''(0)}{D''(0)}.
\end{equation}
The last equality follows from the continuity of \(N''\) and \(D''\) at
\(0\), together with \(D''(0)\neq0\).  Using \eqref{eq:mag-N-D} proves
\eqref{eq:lhopital-formula}.
\end{proof}

\begin{corollary}[Explicit balanced small-scale formula]\label{cor:one-plus-formula}
Let \(tX\) be a balanced \(t\)-scaled \((n,k;\alpha,\beta,\gamma,\delta)\)-two-chunk cyclic metric space.  Define
\begin{equation}\label{eq:Omega-definition}
\Omega_{n,k}(\alpha,\beta,\gamma,\delta)
=2(n-1)(\alpha^2+\beta^2)-4k(n-k)(\gamma-\delta)^2.
\end{equation}
If \(\Omega_{n,k}(\alpha,\beta,\gamma,\delta)\neq 0\), then
\begin{equation}\label{eq:one-plus-formula}
\lim_{t\to 0+}\Mag(tX)
=
1+\frac{(n-1)^2(\alpha-\beta)^2}
{\Omega_{n,k}(\alpha,\beta,\gamma,\delta)}.
\end{equation}
\end{corollary}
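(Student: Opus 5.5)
Our plan is to compute \(N''(0)\) and \(D''(0)\) explicitly, show that \(D''(0)=\tfrac{n}{2}\,\Omega_{n,k}(\alpha,\beta,\gamma,\delta)\) (up to a positive constant that also appears in \(N''(0)\)), and then invoke Proposition \ref{prop:lhopital}. Put \(\sigma_1=k\gamma+(n-k)\delta\) and \(\sigma_2=k\gamma^2+(n-k)\delta^2\). Differentiating \eqref{eq:A-t-definition}--\eqref{eq:C-t-definition} twice gives
\[
A''(0)=(n-1)\alpha^2,\qquad B''(0)=(n-1)\beta^2,\qquad C''(0)=\sigma_2 .
\]
Hence
\[
N''(0)=n\bigl((n-1)(\alpha^2+\beta^2)-2\sigma_2\bigr).
\]

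For the denominator, the product rule and \(A(0)=B(0)=C(0)=n\) give
\[
D''(0)=n\bigl(A''(0)+B''(0)-2C''(0)\bigr)+2A'(0)B'(0)-2C'(0)^2 .
\]
Substituting the first derivatives from \eqref{eq:first-derivatives-A-B-C} yields
\[
D''(0)=N''(0)+2(n-1)^2\alpha\beta-2\sigma_1^2 .
\]
The balanced condition \eqref{eq:balanced-condition} gives \(\sigma_1=(n-1)(\alpha+\beta)/2\), so
\[
2(n-1)^2\alpha\beta-2\sigma_1^2=-\tfrac{(n-1)^2}{2}(\alpha-\beta)^2 .
\]
Therefore
\[
D''(0)=N''(0)-\tfrac{(n-1)^2}{2}(\alpha-\beta)^2 .
\]
This is the source of the "\(1+\)" structure, since
\[
\frac{N''(0)}{D''(0)}=1+\frac{(n-1)^2(\alpha-\beta)^2/2}{D''(0)} .
\]

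The main step is to rewrite \(D''(0)\) in terms of \(\Omega\), which requires eliminating \(\sigma_2\). We will use the variance identity
\[
n\sigma_2-\sigma_1^2=k(n-k)(\gamma-\delta)^2,
\]
which is a direct expansion. This gives
\[
\sigma_2=\frac{\sigma_1^2+k(n-k)(\gamma-\delta)^2}{n}.
\]
Substituting into \(D''(0)=n(n-1)(\alpha^2+\beta^2)-2n\sigma_2+2(n-1)^2\alpha\beta-2\sigma_1^2\) gives
\[
D''(0)=n(n-1)(\alpha^2+\beta^2)-2k(n-k)(\gamma-\delta)^2-4\sigma_1^2+2(n-1)^2\alpha\beta .
\]
With \(4\sigma_1^2=(n-1)^2(\alpha+\beta)^2\), the \(\alpha\beta\) terms cancel, and we obtain
\[
D''(0)=n(n-1)(\alpha^2+\beta^2)-(n-1)^2(\alpha^2+\beta^2)-2k(n-k)(\gamma-\delta)^2
=(n-1)(\alpha^2+\beta^2)-2k(n-k)(\gamma-\delta)^2 .
\]
That is, \(D''(0)=\tfrac12\Omega_{n,k}(\alpha,\beta,\gamma,\delta)\).

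Hence \(\Omega\neq0\) is equivalent to \(D''(0)\neq0\). Proposition \ref{prop:lhopital} then applies, and
\[
\lim_{t\to0+}\Mag(tX)=1+\frac{(n-1)^2(\alpha-\beta)^2/2}{\Omega/2}=1+\frac{(n-1)^2(\alpha-\beta)^2}{\Omega_{n,k}(\alpha,\beta,\gamma,\delta)},
\]
which is \eqref{eq:one-plus-formula}. The only delicate point is the bookkeeping in the \(\sigma_2\) elimination. To guard against sign or factor errors there, we will sanity-check the final formula on the case \(\alpha=\beta\), where the limit should be \(1\).
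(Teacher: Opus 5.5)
Your proposal is correct and follows essentially the same route as the paper: compute \(N''(0)\) and \(D''(0)\) via \(\sigma_1,\sigma_2\), then use the balanced condition \(\sigma_1=(n-1)(\alpha+\beta)/2\) and the variance identity \(n\sigma_2-\sigma_1^2=k(n-k)(\gamma-\delta)^2\). These give \(D''(0)=\tfrac12\Omega_{n,k}\) and \(N''(0)=D''(0)+\tfrac{(n-1)^2}{2}(\alpha-\beta)^2\), after which Proposition \ref{prop:lhopital} applies. Two small remarks: your opening claim \(D''(0)=\tfrac{n}{2}\Omega_{n,k}\) is a slip, since your own computation correctly gives \(\tfrac12\Omega_{n,k}\); and writing \(D''(0)=N''(0)+2A'(0)B'(0)-2C'(0)^2\) neatly makes explicit the step the paper leaves as ``substituting yields.''
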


\begin{proof}
Put
\begin{equation}\label{eq:sigma-two}
\sigma_2=k\gamma^2+(n-k)\delta^2.
\end{equation}
A direct calculation gives
\begin{align}
N''(0)&=n\bigl((n-1)(\alpha^2+\beta^2)-2\sigma_2\bigr),
\label{eq:N-second}\\
D''(0)&=n(n-1)(\alpha^2+\beta^2)+2(n-1)^2\alpha\beta
-2\sigma_1^2-2n\sigma_2.
\label{eq:D-second-general}
\end{align}
Under the balanced condition,
\begin{equation}\label{eq:sigma-one-balanced}
\sigma_1=\frac{(n-1)(\alpha+\beta)}{2}.
\end{equation}
Moreover,
\begin{equation}\label{eq:sigma-two-variance}
\sigma_2=\frac{\sigma_1^2}{n}+\frac{k(n-k)}{n}(\gamma-\delta)^2.
\end{equation}
Substituting \eqref{eq:sigma-one-balanced} and \eqref{eq:sigma-two-variance} into \eqref{eq:N-second} and \eqref{eq:D-second-general} yields
\begin{align}
D''(0)&=(n-1)(\alpha^2+\beta^2)-2k(n-k)(\gamma-\delta)^2,
\label{eq:D-second-simplified}\\
N''(0)&=D''(0)+\frac{(n-1)^2}{2}(\alpha-\beta)^2.
\label{eq:N-D-second-relation}
\end{align}
Since \(\Omega_{n,k}=2D''(0)\), Proposition \ref{prop:lhopital} gives
\begin{equation}\label{eq:one-plus-proof}
\lim_{t\to 0+}\Mag(tX)
=1+\frac{\frac{(n-1)^2}{2}(\alpha-\beta)^2}{D''(0)}
=1+\frac{(n-1)^2(\alpha-\beta)^2}{\Omega_{n,k}(\alpha,\beta,\gamma,\delta)}.
\end{equation}
\end{proof}

\section{Small-scale consequences}

\begin{proposition}[The cases \(n=3,4,5\)]\label{prop:n-3-4-5}
Let \(3\leq n\leq 5\), and let \(1\leq k\leq \lfloor n/2\rfloor\).  For every balanced \(t\)-scaled \((n,k;\alpha,\beta,\gamma,\delta)\)-two-chunk cyclic metric space for which the small-scale limit is finite,
\begin{equation}\label{eq:n-3-4-5-geq-one}
\lim_{t\to 0+}\Mag(tX)\geq 1.
\end{equation}
\end{proposition}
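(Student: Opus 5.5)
Throughout, write \(m=\min\{\alpha,\beta\}\), \(M=\max\{\alpha,\beta\}\) and \(d=\gamma-\delta\). By Remark~\ref{rem:k-symmetry} we may assume \(\alpha\le\beta\), so \(m=\alpha\) and \(M=\beta\). The plan is to split into two cases.

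\emph{Case \(\alpha=\beta\).} We bypass Corollary~\ref{cor:one-plus-formula}, because \(\Omega_{n,k}\) might vanish there. In this case \(A(t)=B(t)\), so
\[
N(t)=2n\bigl(A(t)-C(t)\bigr),\qquad D(t)=\bigl(A(t)-C(t)\bigr)\bigl(A(t)+C(t)\bigr).
\]
Whenever \(A\neq C\), Proposition~\ref{prop:magnitude-formula} therefore gives
\[
\Mag(tX)=\frac{2n}{A(t)+C(t)}.
\]
Since \(A(t)+C(t)\to 2n\), the limit is \(1\). Here I use that \(A-C\) is a nonzero real-analytic function, or else \(D\equiv0\), a degenerate case excluded by the finiteness hypothesis. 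So the claimed inequality \(\geq 1\) holds.

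\emph{Case \(\alpha<\beta\).} By Corollary~\ref{cor:one-plus-formula} it suffices to show \(\Omega_{n,k}>0\), since the correction term \((n-1)^2(\alpha-\beta)^2/\Omega_{n,k}\) is then positive. This reduces to showing
\[
2k(n-k)d^2<(n-1)(\alpha^2+\beta^2).
\]
The generic tool is the bound \(|d|\le m\) from \eqref{eq:metric-condition-mixed-difference}. Combined with \(\alpha^2+\beta^2>2m^2\), it settles every case with \(k(n-k)\le n-1\). These are \((n,k)=(3,1)\), \((4,1)\) and \((5,1)\).

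The remaining cases \((4,2)\) and \((5,2)\) are the main obstacle. Here \(|d|\le m\) is too weak, and I will use the balanced condition \eqref{eq:balanced-condition} together with \(\gamma,\delta\ge M/2\). The latter follows from \eqref{eq:metric-condition-gamma-gamma} and \eqref{eq:metric-condition-delta-delta}, and is valid since \(k\ge2\).

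For \((4,2)\), the balanced condition reads \(\gamma+\delta=\tfrac34(\alpha+\beta)\). Hence
\[
|d|=(\gamma+\delta)-2\min\{\gamma,\delta\}\le \tfrac34(\alpha+\beta)-\beta=\tfrac34\alpha-\tfrac14\beta\le\tfrac{\alpha}{2}.
\]
Therefore \(8d^2\le 2\alpha^2<3(\alpha^2+\beta^2)\), as required.

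For \((5,2)\), the balanced condition reads \(2\gamma+3\delta=2(\alpha+\beta)\), and the target is \(d^2<(\alpha^2+\beta^2)/3\).
\begin{itemize}
\item If \(d\ge0\), write \(5\delta+2d=2(\alpha+\beta)\). Using \(\delta\ge\beta/2\) gives \(d\le\alpha-\beta/4\le\tfrac34\alpha\), so \(d^2\le\tfrac{9}{16}\alpha^2<\tfrac{2}{3}\alpha^2\le(\alpha^2+\beta^2)/3\).
\item If \(d<0\), write \(5\gamma-3d=2(\alpha+\beta)\). Using \(\gamma\ge\beta/2\) gives \(|d|\le(2\alpha-\beta/2)/3\le\alpha/2\), which again suffices.
\end{itemize}
I expect only these two-case inequality manipulations to need care. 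In particular, one must check that the lower bounds \(\gamma,\delta\ge M/2\) are combined with the balanced condition in the right direction for each sign of \(d\).
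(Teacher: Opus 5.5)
Your proof is correct and follows essentially the paper's argument. It uses the same ingredients: the bound $|\gamma-\delta|\le\min\{\alpha,\beta\}$ for $k=1$, the lower bounds $\gamma,\delta\ge\max\{\alpha,\beta\}/2$ combined with the balanced condition for $(4,2)$ and $(5,2)$, and the cancellation $\Mag(tX)=2n/(A(t)+C(t))$ when $\alpha=\beta$. The only difference is that you split off $\alpha=\beta$ at the start instead of analysing $\Omega_{n,k}=0$ at the end, which is slightly cleaner.

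One small slip: Remark~\ref{rem:k-symmetry} concerns swapping $\gamma$ and $\delta$, not $\alpha$ and $\beta$. Your reduction to $\alpha\le\beta$ is still harmless, because everything you use is symmetric in $\alpha,\beta$: the balanced condition, $\Omega_{n,k}$, and the inequalities of Lemma~\ref{lem:metric-conditions}.
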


\begin{proof}
The possible pairs \((n,k)\) with \(3\leq n\leq 5\) and
\(1\leq k\leq \lfloor n/2\rfloor\) are
\begin{equation}\label{eq:small-n-all-pairs}
(3,1),\qquad (4,1),\qquad (4,2),\qquad (5,1),\qquad (5,2).
\end{equation}
We check these cases one by one, using only the triangle inequalities and the
balanced condition.

Set
\begin{equation}\label{eq:u-v-eta}
u=\max\{\alpha,\beta\},
\qquad
v=\min\{\alpha,\beta\},
\qquad
\eta=|\gamma-\delta|.
\end{equation}
The mixed triangle inequalities give
\begin{equation}\label{eq:eta-leq-v}
\eta\leq v.
\end{equation}
Recall that
\begin{equation}\label{eq:Omega-small-proof}
\Omega_{n,k}
=
2(n-1)(u^2+v^2)-4k(n-k)\eta^2.
\end{equation}
By Corollary \ref{cor:one-plus-formula}, it is enough to show that
\(\Omega_{n,k}\) cannot be negative.  Indeed, if \(\Omega_{n,k}>0\), then
\[
\lim_{t\to 0+}\Mag(tX)
=
1+\frac{(n-1)^2(\alpha-\beta)^2}{\Omega_{n,k}}
\geq 1.
\]

We first treat the three cases with \(k=1\).

For \((n,k)=(3,1)\), we have
\begin{equation}\label{eq:n3k1-Omega}
\Omega_{3,1}=4(u^2+v^2)-8\eta^2.
\end{equation}
If \(\Omega_{3,1}<0\), then
\begin{equation}\label{eq:n3k1-contradiction}
\eta^2>\frac{u^2+v^2}{2}\geq v^2,
\end{equation}
hence \(\eta>v\), contradicting \eqref{eq:eta-leq-v}.  Thus
\(\Omega_{3,1}\geq 0\).

For \((n,k)=(4,1)\), we have
\begin{equation}\label{eq:n4k1-Omega}
\Omega_{4,1}=6(u^2+v^2)-12\eta^2.
\end{equation}
If \(\Omega_{4,1}<0\), then again
\begin{equation}\label{eq:n4k1-contradiction}
\eta^2>\frac{u^2+v^2}{2}\geq v^2,
\end{equation}
so \(\eta>v\), contradicting \eqref{eq:eta-leq-v}.  Hence
\(\Omega_{4,1}\geq 0\).

For \((n,k)=(5,1)\), we have
\begin{equation}\label{eq:n5k1-Omega}
\Omega_{5,1}=8(u^2+v^2)-16\eta^2.
\end{equation}
If \(\Omega_{5,1}<0\), then
\begin{equation}\label{eq:n5k1-contradiction}
\eta^2>\frac{u^2+v^2}{2}\geq v^2,
\end{equation}
so \(\eta>v\), contradicting \eqref{eq:eta-leq-v}.  Thus
\(\Omega_{5,1}\geq 0\).

Next consider \((n,k)=(4,2)\).  The balanced condition is
\begin{equation}\label{eq:n4k2-balanced-sum}
\gamma+\delta=\frac{3}{4}(u+v).
\end{equation}
Since \(k=2\) and \(n-k=2\), both the \(\gamma\)-edges and the
\(\delta\)-edges occur in pairs.  Therefore the triangle inequalities force
\begin{equation}\label{eq:n4k2-lower-bounds}
u\leq 2\gamma,
\qquad
u\leq 2\delta,
\end{equation}
or equivalently
\begin{equation}\label{eq:n4k2-gamma-delta-lower}
\gamma,\delta\geq \frac{u}{2}.
\end{equation}
In particular,
\begin{equation}\label{eq:n4k2-necessary}
\frac{3}{4}(u+v)=\gamma+\delta\geq u,
\end{equation}
so a metric space can exist only if \(u\leq 3v\).  Moreover, under the lower
bounds \eqref{eq:n4k2-gamma-delta-lower}, the largest possible value of
\(|\gamma-\delta|\) occurs when the smaller of \(\gamma,\delta\) is \(u/2\).
Thus
\begin{equation}\label{eq:n4k2-eta-bound}
\eta
\leq
\frac{3}{4}(u+v)-u
=
\frac{3v-u}{4}.
\end{equation}
Now
\begin{equation}\label{eq:n4k2-Omega}
\Omega_{4,2}=6(u^2+v^2)-16\eta^2.
\end{equation}
If \(\Omega_{4,2}<0\), then
\begin{equation}\label{eq:n4k2-negative-condition}
\eta^2>\frac{3}{8}(u^2+v^2).
\end{equation}
On the other hand, \eqref{eq:n4k2-eta-bound} gives
\begin{equation}\label{eq:n4k2-upper-square}
\eta^2\leq \frac{(3v-u)^2}{16}.
\end{equation}
But, since \(u\geq v>0\),
\begin{equation}\label{eq:n4k2-elementary-positive}
6(u^2+v^2)-(3v-u)^2
=
5u^2+6uv-3v^2
>0.
\end{equation}
Equivalently,
\begin{equation}\label{eq:n4k2-contradiction}
\frac{(3v-u)^2}{16}
<
\frac{3}{8}(u^2+v^2),
\end{equation}
which contradicts \eqref{eq:n4k2-negative-condition}.  Hence
\(\Omega_{4,2}>0\).

Finally consider \((n,k)=(5,2)\).  The balanced condition is
\begin{equation}\label{eq:n5k2-balanced}
2\gamma+3\delta=2(u+v).
\end{equation}
Again, since \(k=2\) and \(n-k=3\), the triangle inequalities force
\begin{equation}\label{eq:n5k2-lower-bounds}
u\leq 2\gamma,
\qquad
u\leq 2\delta,
\end{equation}
so
\begin{equation}\label{eq:n5k2-gamma-delta-lower}
\gamma,\delta\geq \frac{u}{2}.
\end{equation}
Therefore
\begin{equation}\label{eq:n5k2-necessary}
2(u+v)=2\gamma+3\delta\geq \frac{5u}{2},
\end{equation}
and hence a metric space can exist only if
\begin{equation}\label{eq:n5k2-u-bound}
u\leq 4v.
\end{equation}

We now bound \(\eta=|\gamma-\delta|\) directly.  If \(\gamma\geq \delta\),
then \(\gamma=\delta+\eta\), and \eqref{eq:n5k2-balanced} gives
\begin{equation}\label{eq:n5k2-case-one-delta}
5\delta+2\eta=2(u+v).
\end{equation}
Since \(\delta\geq u/2\), we obtain
\begin{equation}\label{eq:n5k2-case-one-eta}
\eta\leq v-\frac{u}{4}
=
\frac{4v-u}{4}.
\end{equation}
If \(\delta\geq \gamma\), then \(\delta=\gamma+\eta\), and
\eqref{eq:n5k2-balanced} gives
\begin{equation}\label{eq:n5k2-case-two-gamma}
5\gamma+3\eta=2(u+v).
\end{equation}
Since \(\gamma\geq u/2\), we obtain
\begin{equation}\label{eq:n5k2-case-two-eta}
\eta\leq \frac{4v-u}{6}
\leq
\frac{4v-u}{4},
\end{equation}
where the last inequality uses \eqref{eq:n5k2-u-bound}.  Hence, in all cases,
\begin{equation}\label{eq:n5k2-eta-bound}
\eta\leq \frac{4v-u}{4}.
\end{equation}

Now
\begin{equation}\label{eq:n5k2-Omega}
\Omega_{5,2}=8(u^2+v^2)-24\eta^2.
\end{equation}
If \(\Omega_{5,2}<0\), then
\begin{equation}\label{eq:n5k2-negative-condition}
\eta^2>\frac{u^2+v^2}{3}.
\end{equation}
But \eqref{eq:n5k2-eta-bound} implies
\begin{equation}\label{eq:n5k2-upper-square}
\eta^2\leq \frac{(4v-u)^2}{16}.
\end{equation}
Since \(u\geq v>0\),
\begin{equation}\label{eq:n5k2-elementary-positive}
16(u^2+v^2)-3(4v-u)^2
=
13u^2+24uv-32v^2
>0.
\end{equation}
Equivalently,
\begin{equation}\label{eq:n5k2-contradiction}
\frac{(4v-u)^2}{16}
<
\frac{u^2+v^2}{3},
\end{equation}
contradicting \eqref{eq:n5k2-negative-condition}.  Hence
\(\Omega_{5,2}>0\).

We have shown, case by case, that \(\Omega_{n,k}<0\) is impossible for every
pair in \eqref{eq:small-n-all-pairs}.  Therefore, in all cases with
\(\Omega_{n,k}>0\), Corollary \ref{cor:one-plus-formula} gives
\[
\lim_{t\to 0+}\Mag(tX)\geq 1.
\]

It remains only to handle the possible equality case \(\Omega_{n,k}=0\).
The above argument shows that equality can occur only in the cases \(k=1\),
and then necessarily
\begin{equation}\label{eq:k1-equality-case}
u=v,
\qquad
\eta=v.
\end{equation}
Thus \(\alpha=\beta=v\), and hence \(A(t)=B(t)\).  The magnitude formula
\eqref{eq:magnitude-formula} becomes, after cancelling the common factor
\(A(t)-C(t)\),
\begin{equation}\label{eq:k1-degenerate-cancellation}
\Mag(tX)
=
\frac{2n(A(t)-C(t))}{(A(t)-C(t))(A(t)+C(t))}
=
\frac{2n}{A(t)+C(t)}.
\end{equation}
Since \(A(0)=C(0)=n\), we get
\begin{equation}\label{eq:k1-degenerate-limit-final}
\lim_{t\to 0+}\Mag(tX)
=
\frac{2n}{2n}
=
1.
\end{equation}
Thus every balanced \(t\)-scaled \((n,k;\alpha,\beta,\gamma,\delta)\)-two-chunk cyclic metric space with
\(3\leq n\leq 5\) has finite small-scale magnitude at least \(1\).
\end{proof}

\begin{proposition}[The case \(n=6\)]\label{prop:n-six}
Let \(n=6\), and let \(1\leq k\leq 3\).
\begin{enumerate}[label=\textup{(\arabic*)}]
\item For \(k=1\) and \(k=3\), there is no balanced \(t\)-scaled \((6,k;\alpha,\beta,\gamma,\delta)\)-two-chunk cyclic metric space with finite small-scale magnitude less than \(1\).
\item For \(k=2\), for every \(R\in\R\setminus\{1\}\), there exists a balanced \(t\)-scaled \((6,2;\alpha,\beta,\gamma,\delta)\)-two-chunk cyclic metric space such that
\begin{equation}\label{eq:n6k2-arbitrary-R}
\lim_{t\to 0+}\Mag(tX)=R.
\end{equation}
\item In particular, there exists a twelve-point finite metric space such that
\begin{equation}\label{eq:twelve-point-below-one}
\lim_{t\to 0+}\Mag(tX)<1.
\end{equation}
\end{enumerate}
\end{proposition}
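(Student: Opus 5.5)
The whole argument runs through Corollary~\ref{cor:one-plus-formula} with \(n=6\). There
\[
\lim_{t\to0+}\Mag(tX)=1+\frac{25(\alpha-\beta)^2}{\Omega_{6,k}},\qquad
\Omega_{6,k}=10(u^2+v^2)-4k(6-k)\eta^2,
\]
with \(u,v,\eta\) as in the proof of Proposition~\ref{prop:n-3-4-5}. A finite limit below \(1\) forces \(\Omega_{6,k}<0\) and \(\alpha\ne\beta\). So part (1) reduces to showing \(\Omega_{6,k}\ge0\) under the triangle inequalities of Lemma~\ref{lem:metric-conditions} together with \eqref{eq:balanced-condition}, exactly as in Proposition~\ref{prop:n-3-4-5}.

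\textbf{Part (1).} For \(k=1\) we have \(\Omega_{6,1}=10(u^2+v^2)-20\eta^2\ge 20(v^2-\eta^2)\ge0\), using \(\eta\le v\). The equality case \(u=v=\eta\) is handled by the cancellation argument of \eqref{eq:k1-degenerate-cancellation}, which gives limit \(1\).

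For \(k=3\) the balanced condition reads \(\gamma+\delta=\tfrac56(u+v)\), and the conditions \(u\le2\gamma\), \(u\le2\delta\) give \(\gamma,\delta\ge u/2\). Hence \(\eta\le\tfrac56(u+v)-u=\tfrac{5v-u}{6}\), so \(36\eta^2\le(5v-u)^2\). Then
\[
10(u^2+v^2)-(5v-u)^2=9u^2+10uv-15v^2\ge4v^2>0 \quad\text{since } u\ge v,
\]
so \(\Omega_{6,3}>0\) and the limit is \(\ge1\).

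\textbf{Part (2).} Here \(k=2\), so \(\Omega_{6,2}=10(u^2+v^2)-32\eta^2\) and the balanced condition is \(\gamma+2\delta=\tfrac54(\alpha+\beta)\). The task is to make the scale-invariant quantity \(\Phi=25(\alpha-\beta)^2/\Omega_{6,2}\) take every value in \(\R\setminus\{0\}\).

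I would work on the set \(P\) of parameters satisfying the balanced condition and the inequalities of Lemma~\ref{lem:metric-conditions} strictly. Within \(P\) I would exhibit three points:
\begin{itemize}[label=--]
\item a point with \(\alpha=\beta\) and \(\Omega_{6,2}<0\): start from \(\alpha=\beta=v\), \(\gamma=\tfrac32v\), \(\delta=\tfrac12 v\), which gives \(\Omega=20v^2-32v^2<0\), and push it slightly into the interior of the constraint region;
\item a point with \(\alpha=\beta\) and \(\Omega_{6,2}>0\), for instance \(\gamma=\delta\);
\item the paper's example \((7/3,29/15,3,7/6)\), where \(\Omega<0\) and \(\alpha\ne\beta\); a direct computation gives \(\Phi=-15/59\), i.e.\ limit \(44/59\).
\end{itemize}

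The negative values \(R<1\) come from \(\Omega<0\). Join the first point to the example by a path in \(P\cap\{\Omega<0\}\), for instance by a straight segment after checking convexity of the linear constraints and the sign of \(\Omega\) along it. Along this path \(\Phi\) runs from \(0^-\) to \(-15/59\). To reach values tending to \(-\infty\), I would move from the example toward the hypersurface \(\Omega=0\) while keeping \(\alpha\ne\beta\), for instance by decreasing \(\gamma-\delta\). The intermediate value theorem then gives all \(R\in(-\infty,1)\).

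The values \(R>1\) come from \(\Omega>0\). Paths with \(\alpha\ne\beta\) that start near \(\alpha=\beta\) and end near \(\Omega=0^+\) sweep out \(\Phi\in(0,\infty)\).

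\textbf{Part (3).} This is the special case \(R=44/59\), realised by the twelve-point space \eqref{eq:intro-matrix}. One checks the inequalities of Lemma~\ref{lem:metric-conditions} and the balanced condition (\(6+\tfrac{28}{3}\cdot\ldots\) reduces to \(2(2\cdot3+4\cdot\tfrac76)=5(\tfrac73+\tfrac{29}{15})\)), then applies Corollary~\ref{cor:one-plus-formula}.

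The main obstacle is part (2): verifying that the chosen paths stay strictly inside the triangle-inequality region while \(\Omega\) keeps the required sign. The region is cut out by linear inequalities, so it is convex, which reduces this to checking endpoints and the sign of the quadratic \(\Omega\) along each segment. If the straight segments fail, I would instead write down an explicit one-parameter family, for example fixing \(\gamma,\delta\) in the ratio used by the example and solving the balanced condition for \(\beta\), and then solve \(\Phi=R-1\) directly.
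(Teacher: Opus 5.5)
Your parts (1) and (3) are the paper's argument: the same bounds $\eta\le v$ for $k=1$ (with the cancellation argument in the equality case $u=v=\eta$) and $\eta\le(5v-u)/6$ for $k=3$, and the same direct evaluation of the example. For part (2) you use the paper's mechanism. The quantity $\Phi=25(\alpha-\beta)^2/\Omega_{6,2}$ is continuous off $\{\Omega_{6,2}=0\}$, vanishes where $\alpha=\beta$, and tends to $\pm\infty$ as $\Omega_{6,2}\to0^{\pm}$ with $\alpha\ne\beta$, so the intermediate value theorem along suitable paths gives every $R\ne1$. The difference is that you work with abstract paths in a convex region, whereas the paper writes down two explicit families. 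The paper's family is $\alpha=r$, $\beta=1$, $\delta=r/2$, $\gamma=(r+5)/4$ for $1\le r\le5$. At $r=1$ it is exactly your first anchor $(v,v,\tfrac32v,\tfrac12v)$, and it is balanced and metric by linear checks in $r$. It has $\Omega_{6,2}=4(2r^2+5r-10)$, which changes sign once, at $r_0=(\sqrt{105}-5)/4$. So this one family yields $(-\infty,1]$ and $[33/13,\infty)$, and the family $\gamma=\delta=5(r+1)/12$ fills $[1,33/13]$. Your formulation shows the phenomenon is not tied to one family; the paper's makes every verification a one-line inequality in $r$.

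As written, though, your part (2) is a plan rather than a proof. The $R>1$ branch is only asserted, with no path named, and the $R<1$ paths are left to a sign and metric check. You also define $P$ by \emph{strict} inequalities, but the example lies on the face $\max\{\alpha,\beta\}=2\delta$ (since $7/3=2\cdot\tfrac76$), so it is not in $P$. Use the closed region instead: it is still convex and is all a metric requires.

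With that change, your route closes concretely:
\begin{itemize}
\item The segment from $(2,2,3,1)$ to $(\tfrac73,\tfrac{29}{15},3,\tfrac76)$ stays on the face $u=2\delta$ and satisfies $\eta\le v$. Along it, $\Omega_{6,2}=-48+32s+\tfrac4{15}s^2<0$ for $s\in[0,1]$, giving $\Phi\in[-\tfrac{15}{59},0]$.
\item Fix $\alpha=\tfrac73$, $\beta=\tfrac{29}{15}$ and lower $\eta=\gamma-\delta$ from $\tfrac{11}6$ to $0$ along $\gamma+2\delta=\tfrac{16}3$. This stays metric and moves $\Omega_{6,2}=\tfrac{4132}{45}-32\eta^2$ through $0$, giving $(-\infty,-\tfrac{15}{59}]$ and $[\tfrac{45}{1033},\infty)$.
\item Finally, $\gamma=\delta=\tfrac5{12}(\alpha+\beta)$ with $\alpha\to\beta$ covers $(0,\tfrac{45}{1033}]$.
\end{itemize}
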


\begin{proof}
Let \(u,v,\eta\) be as in \eqref{eq:u-v-eta}.

For \(k=1\), the estimate
\begin{equation}\label{eq:n6k1-Omega}
\Omega_{6,1}=10(u^2+v^2)-20\eta^2
\geq 10(u^2-v^2)
\end{equation}
shows that \(\Omega_{6,1}\geq 0\).  If \(\Omega_{6,1}>0\), then Corollary \ref{cor:one-plus-formula} gives a limit at least \(1\).  If \(\Omega_{6,1}=0\), then \(\alpha=\beta\), and the same direct expansion as in \eqref{eq:k1-degenerate-limit-final} gives limit \(1\).

For \(k=3\), the balanced condition gives
\begin{equation}\label{eq:n6k3-balanced}
\gamma+\delta=\frac{5}{6}(u+v).
\end{equation}
Since \(\gamma,\delta\geq u/2\),
\begin{equation}\label{eq:n6k3-eta-bound}
\eta\leq \gamma+\delta-u=\frac{5v-u}{6}.
\end{equation}
Thus
\begin{equation}\label{eq:n6k3-Omega-positive}
\Omega_{6,3}
=10(u^2+v^2)-36\eta^2
\geq 10(u^2+v^2)-(5v-u)^2>0.
\end{equation}
Again Corollary \ref{cor:one-plus-formula} implies that the finite small-scale limit is at least \(1\).  This proves \textup{(1)}.

It remains to prove \textup{(2)}.  We use two explicit one-parameter families.
The particular normalizations used below are not canonical; they are chosen to
make the balanced condition automatic and to keep the resulting one-variable
formulae transparent.

First, we impose the simplifying condition \(\gamma=\delta\).  If
\(\alpha=r\) and \(\beta=1\), then the balanced condition forces
\(\gamma=\delta=5(r+1)/12\).  Thus, for \(1\leq r\leq 5\), define
\begin{equation}\label{eq:n6-family-zero}
\alpha=r,
\qquad
\beta=1,
\qquad
\gamma=\delta=\frac{5(r+1)}{12}.
\end{equation}
This family is balanced, and Lemma \ref{lem:metric-conditions} shows that it is a metric family exactly in the displayed range.  Since \(\gamma=\delta\), Corollary \ref{cor:one-plus-formula} gives
\begin{equation}\label{eq:n6-L-zero}
L_0(r)
:=\lim_{t\to 0+}\Mag(tX)
=1+\frac{25(r-1)^2}{10(r^2+1)}.
\end{equation}
The function \(L_0\) is continuous and increasing on \([1,5]\), and
\begin{equation}\label{eq:n6-L-zero-endpoints}
L_0(1)=1,
\qquad
L_0(5)=\frac{33}{13}.
\end{equation}
Hence this family realizes every value in \([1,33/13]\).

Second, for \(1\leq r\leq 5\), define
\begin{equation}\label{eq:n6-family-one}
\alpha=r,
\qquad
\beta=1,
\qquad
\delta=\frac{r}{2},
\qquad
\gamma=\frac{r+5}{4}.
\end{equation}
This family is also balanced and metric by Lemma \ref{lem:metric-conditions}.  Here
\begin{equation}\label{eq:n6-family-one-eta}
\gamma-
\delta=\frac{5-r}{4},
\end{equation}
and Corollary \ref{cor:one-plus-formula} gives
\begin{equation}\label{eq:n6-L-one}
L_1(r)
:=\lim_{t\to 0+}\Mag(tX)
=1+\frac{25(r-1)^2}{4(2r^2+5r-10)}.
\end{equation}
The denominator in \eqref{eq:n6-L-one} vanishes at
\begin{equation}\label{eq:n6-root}
r_0=\frac{\sqrt{105}-5}{4}\in(1,5).
\end{equation}
Moreover,
\begin{align}
\lim_{r\to r_0-}L_1(r)&=-\infty,
\label{eq:n6-negative-infinity}\\
\lim_{r\to r_0+}L_1(r)&=+\infty,
\label{eq:n6-positive-infinity}\\
L_1(1)&=1,
\label{eq:n6-L-one-at-one}\\
L_1(5)&=\frac{33}{13}.
\label{eq:n6-L-one-at-five}
\end{align}
By continuity, \(L_1\) realizes every value below \(1\) on \([1,r_0)\), and every value at least \(33/13\) on \((r_0,5]\).  Together with \eqref{eq:n6-L-zero}, this realizes every value in \(\R\setminus\{1\}\).  This proves \textup{(2)}.

Finally, \textup{(3)} follows from \textup{(2)} by choosing any \(R<1\).  The concrete matrix in \eqref{eq:intro-matrix} gives the explicit value \(R=44/59\).
\end{proof}

\begin{proposition}[The cases \(n\geq 7\)]\label{prop:n-geq-seven}
Let \(n\geq 7\) and let
\begin{equation}\label{eq:n-geq-seven-k-condition}
2\leq k\leq \left\lfloor\frac{n}{2}\right\rfloor.
\end{equation}
Then for every \(R\in\R\setminus\{1\}\), there exists a balanced \(t\)-scaled \((n,k;\alpha,\beta,\gamma,\delta)\)-two-chunk cyclic metric space such that
\begin{equation}\label{eq:n-geq-seven-arbitrary-R}
\lim_{t\to 0+}\Mag(tX)=R.
\end{equation}
\end{proposition}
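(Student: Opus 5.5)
The plan is to copy the two-family argument of Proposition \ref{prop:n-six}, but to obtain the sign change of \(\Omega_{n,k}\) by convexity rather than from an explicit rational family. Throughout I normalize \(\beta=1\) and \(\alpha=r\geq 1\), and write \(\eta=|\gamma-\delta|\). The key structural remark is that the set \(\mathcal P\subset\R^4\) of balanced parameter vectors \((\alpha,\beta,\gamma,\delta)\) satisfying the conditions of Lemma \ref{lem:metric-conditions} is convex. Indeed, \eqref{eq:balanced-condition} is linear, each inequality \eqref{eq:metric-condition-gamma-gamma}--\eqref{eq:metric-condition-mixed-difference} is of the form convex \(\leq\) concave, and \(\max\), \(\min\), \(|\cdot|\) are convex or concave as needed. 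Hence every segment between two admissible points consists of balanced metric spaces.

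\emph{Step 1 (values in \([1,M]\)).} Take \(\gamma=\delta=(n-1)(r+1)/(2n)\) for \(1\leq r\leq n-1\). This is balanced, and Lemma \ref{lem:metric-conditions} reduces to \(r\leq 2\delta\), i.e.\ \(r\leq n-1\). Since \(\eta=0\), Corollary \ref{cor:one-plus-formula} gives
\[
L_0(r)=1+\frac{(n-1)(r-1)^2}{2(r^2+1)}.
\]
This function is continuous and increasing on \([1,n-1]\), so it realizes every value in \([1,M]\), where \(M=L_0(n-1)\). Let \(Q\) denote the point with \(r=n-1\); there \(\Omega_{n,k}(Q)=2(n-1)((n-1)^2+1)>0\).

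\emph{Step 2 (a point with \(\Omega_{n,k}<0\)).} This is the main obstacle, and it is exactly where \(n\geq 7\) and \(k\geq 2\) must enter. I will take \(\alpha=\beta=1\), \(\gamma=\delta+\eta\) and \(\eta=\min\{1,(n-2)/(2k)\}\), with \(\delta\) fixed by balance: \(\delta=(n-1-k\eta)/n\).

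The metric conditions then amount to \(\delta\geq 1/2\) (equivalently \(\eta\leq (n-2)/(2k)\)), \(\eta\leq 1\), and \(\gamma+\delta\geq 1\), which all hold. Since \(\alpha=\beta=1\), the condition \(\Omega_{n,k}<0\) becomes
\[
k(n-k)\eta^2>n-1 .
\]
\begin{itemize}
\item If \(\eta=1\), this reads \((k-1)(n-k-1)>0\), which holds because \(2\leq k\leq n-2\).
\item If \(\eta=(n-2)/(2k)\), it reads \((n-2)^2(n-k)>4k(n-1)\). The left side decreases in \(k\) and the right side increases in \(k\), so the worst case is \(k=n/2\). There it becomes \(n^2-8n+8>0\), which holds precisely for \(n\geq 7\).
\end{itemize}
Call this point \(P\). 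I expect the details to need rechecking, but the structure is that \(n\geq7\) is exactly what makes \(P\) exist.

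\emph{Step 3 (values off \([1,M]\)).} Move along the segment \(P_s=(1-s)P+sQ\), \(s\in(0,1]\), which lies in \(\mathcal P\) by convexity. On this segment \(\alpha-\beta=s(n-2)\neq0\), and \(\Omega_{n,k}(P_s)\) is a continuous (quadratic) function of \(s\), negative near \(s=0\) and positive at \(s=1\). Let \(s_0\) be its first zero. On \((0,s_0)\), Corollary \ref{cor:one-plus-formula} gives limits tending to \(1^-\) as \(s\to0^+\) (numerator \(O(s^2)\), \(\Omega\) bounded away from \(0\)) and to \(-\infty\) as \(s\to s_0^-\). By the intermediate value theorem, every \(R<1\) is realized.

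For \(R>M\), take the last zero \(s_1\) of \(\Omega_{n,k}\) on the segment. On \((s_1,1]\) the limit is \(+\infty\) as \(s\to s_1^+\) and equals \(M\) at \(s=1\), so every \(R\geq M\) is realized. Together with Step 1, this covers \(\R\setminus\{1\}\).
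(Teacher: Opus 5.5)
Your proof is correct. It rests on the same mechanism as the paper's: find a balanced admissible point with \(\Omega_{n,k}<0\) at \(\alpha=\beta\), move continuously to a region with \(\Omega_{n,k}>0\) while \(\alpha\neq\beta\), and use the blow-up at \(\Omega_{n,k}=0\) together with the intermediate value theorem. It organizes this differently, though.

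\emph{The paper's route.} The paper uses a single two-parameter family \(\beta=1\), \(\alpha=r\), \(\gamma-\delta=h\), with \(\delta\) fixed by balance. It chooses \(\theta\) strictly inside \(\bigl(\sqrt{(n-1)/(k(n-k))},\,\min\{1,(n-2)/(2k)\}\bigr)\), so that the inequalities of Lemma~\ref{lem:metric-conditions} hold strictly at \(r=1\) and persist for \(r\in(1,1+\varepsilon)\) by continuity. It then fixes such an \(r\) and varies \(h\in[0,\theta]\). The denominator \(2(n-1)(r^2+1)-4k(n-k)h^2\) is strictly decreasing in \(h\ge0\), so it has a unique zero. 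Since \(L(r,0)\) and \(L(r,\theta)\) both tend to \(1\) as \(r\to1^+\), the two branches on either side of that zero already give every \(R>1\) and every \(R<1\). No separate \(\gamma=\delta\) family is needed.

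\emph{Your route.} You replace the perturbation in \(r\) by convexity of the balanced admissible region. This lets you use the boundary point \(P\), with \(\eta=\min\{1,(n-2)/(2k)\}\), directly. It also makes admissibility along the whole segment \(PQ\) automatic, so no \(\varepsilon\)-argument is needed.

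\emph{What it costs.} \(\Omega_{n,k}(P_s)\) is only a quadratic in \(s\), so you must work with its first and last zeros rather than a unique crossing. The segment also yields only \((-\infty,1)\cup[M,\infty)\), which is why your Step~1 is genuinely needed to fill \([1,M]\).

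\emph{The shared arithmetic.} Your condition \(k(n-k)\eta^2>n-1\) at \(\eta=\min\{1,(n-2)/(2k)\}\) is exactly the paper's nonemptiness of the \(\theta\)-interval. Both reduce to \((k-1)(n-k-1)>0\) and \(n^2-8n+8>0\). Your Step~2 computations check out.

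One small correction to your closing remark: \(n\ge7\) is what makes \(P\) exist uniformly in \(k\), via the worst case \(k=n/2\). It is not exactly what makes \(P\) exist. For particular pairs such as \((n,k)=(6,2)\) your point \(P\) also exists, with \(\eta=1\) and \(k(n-k)=8>5\). This is consistent with part~(2) of Proposition~\ref{prop:n-six}.
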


\begin{proof}
Put
\begin{equation}\label{eq:m-q-definition}
m=n-1,
\qquad
q=n-k.
\end{equation}
Since \(n\geq 7\) and \(2\leq k\leq \lfloor n/2\rfloor\), one can choose a real number \(\theta\) satisfying
\begin{equation}\label{eq:theta-choice}
\sqrt{\frac{m}{kq}}<\theta<\min\left\{1,\frac{n-2}{2k}\right\}.
\end{equation}
Indeed, \(kq>m\), and the inequality
\begin{equation}\label{eq:theta-interval-nonempty}
\frac{m}{kq}<\frac{(n-2)^2}{4k^2}
\end{equation}
follows from \(k\leq n/2\), \(q\geq n/2\), and \((n-2)^2>4(n-1)\) for \(n\geq 7\).

For \(r>1\) close to \(1\) and for \(0\leq h\leq \theta\), define
\begin{align}
\beta&=1,
\label{eq:general-family-beta}\\
\alpha&=r,
\label{eq:general-family-alpha}\\
\delta(r,h)&=\frac{(n-1)(r+1)}{2n}-\frac{k}{n}h,
\label{eq:general-family-delta}\\
\gamma(r,h)&=\delta(r,h)+h.
\label{eq:general-family-gamma}
\end{align}
Then
\begin{equation}\label{eq:general-family-balanced}
k\gamma(r,h)+(n-k)\delta(r,h)=\frac{(n-1)(r+1)}{2},
\end{equation}
so the balanced condition holds.  By the strict upper bound in \eqref{eq:theta-choice}, the inequalities in Lemma \ref{lem:metric-conditions} hold for all \(0\leq h\leq\theta\), after restricting \(r\) to a sufficiently small interval \((1,1+\varepsilon)\).  Hence \eqref{eq:general-family-beta}--\eqref{eq:general-family-gamma} define balanced \((n,k;\alpha,\beta,\gamma,\delta)\)-two-chunk cyclic metric spaces.

For this family, Corollary \ref{cor:one-plus-formula} gives
\begin{equation}\label{eq:general-family-limit}
L(r,h)
:=\lim_{t\to 0+}\Mag(tX)
=1+\frac{m^2(r-1)^2}{2m(r^2+1)-4kq h^2},
\end{equation}
whenever the denominator is nonzero.  For \(r\) sufficiently close to \(1\), the denominator in \eqref{eq:general-family-limit} is positive at \(h=0\), while it is negative at \(h=\theta\), because
\begin{equation}\label{eq:general-denominator-signs}
2m(1^2+1)-4kq\theta^2=4m-4kq\theta^2<0.
\end{equation}
Thus the denominator crosses zero as \(h\) varies from \(0\) to \(\theta\).  On the positive side of the crossing, \(L(r,h)\) tends to \(+\infty\); on the negative side, it tends to \(-\infty\).  Also,
\begin{align}
L(r,0)&=1+\frac{m(r-1)^2}{2(r^2+1)}\longrightarrow 1
\quad \text{as } r\to 1+,
\label{eq:general-L-at-zero}\\
L(r,\theta)&\longrightarrow 1
\quad \text{as } r\to 1+,
\label{eq:general-L-at-theta}
\end{align}
with \(L(r,\theta)<1\) for \(r\) sufficiently close to \(1\).

Now let \(R>1\).  Choose \(r>1\) sufficiently close to \(1\) so that \(L(r,0)<R\).  Since \(L(r,h)\to +\infty\) at the positive side of the zero of the denominator, the intermediate value theorem gives an \(h\in[0,\theta]\) with \(L(r,h)=R\).  Similarly, if \(R<1\), choose \(r>1\) sufficiently close to \(1\) so that \(L(r,\theta)>R\).  Since \(L(r,h)\to -\infty\) at the negative side of the zero of the denominator, the intermediate value theorem again gives an \(h\in[0,\theta]\) with \(L(r,h)=R\).  This proves the proposition.
\end{proof}

\section{Example revisited}

\begin{example}[The twelve-point example]\label{ex:twelve-point}
The matrix in \eqref{eq:intro-matrix} is
\begin{equation}\label{eq:example-as-two-chunk}
D_{6,2}\left(\frac{7}{3},\frac{29}{15},3,\frac{7}{6}\right).
\end{equation}
The triangle inequalities follow from Lemma \ref{lem:metric-conditions}, since
\begin{align}
\max\left\{\frac{7}{3},\frac{29}{15}\right\}&=\frac{7}{3}\leq 2\cdot 3,
\label{eq:example-triangle-one}\\
\max\left\{\frac{7}{3},\frac{29}{15}\right\}&=\frac{7}{3}=2\cdot \frac{7}{6},
\label{eq:example-triangle-two}\\
\max\left\{\frac{7}{3},\frac{29}{15}\right\}&=\frac{7}{3}\leq 3+\frac{7}{6},
\label{eq:example-triangle-three}\\
\left|3-\frac{7}{6}\right|&=\frac{11}{6}<\frac{29}{15}
=\min\left\{\frac{7}{3},\frac{29}{15}\right\}.
\label{eq:example-triangle-four}
\end{align}
It is balanced because
\begin{equation}\label{eq:example-balanced}
2\left(2\cdot 3+4\cdot \frac{7}{6}\right)
=\frac{64}{3}
=5\left(\frac{7}{3}+\frac{29}{15}\right).
\end{equation}
Moreover,
\begin{align}
\Omega_{6,2}
&=10\left(\left(\frac{7}{3}\right)^2+
\left(\frac{29}{15}\right)^2\right)
-32\left(3-\frac{7}{6}\right)^2
\label{eq:example-Omega-computation}\\
&=-\frac{236}{15},
\label{eq:example-Omega-value}
\end{align}
and
\begin{equation}\label{eq:example-numerator-computation}
(6-1)^2\left(\frac{7}{3}-\frac{29}{15}\right)^2=4.
\end{equation}
Therefore Corollary \ref{cor:one-plus-formula} gives
\begin{equation}\label{eq:example-final-limit}
\lim_{t\to 0+}\Mag(tX)
=1+\frac{4}{-236/15}
=\frac{44}{59}<1.
\end{equation}
\end{example}

\section*{Acknowledgements}

The author used LLM as an auxiliary tool during the final stage of this
work, mainly for brainstorming, checking algebraic manipulations, and improving
the exposition. The research direction and mathematical content were developed,
reviewed, and verified by the author, who takes full responsibility for the
paper.

\end{document}